\def\St{\mathop{\rm St}\nolimits}
\def\tr{\mathop{\rm tr}\nolimits}
\def\diag{\mathop{\rm diag}\nolimits}
\def\skew{\mathop{\rm skew}\nolimits}
\def\sym{\mathop{\rm sym}\nolimits}
\def\grad{\mathop{\rm grad}\nolimits}
\def\Skew{\mathop{\rm Skew}\nolimits}
\def\c{{\rm c}}
\def\D{{\rm D}}
\def\Hess{\mathop{\rm Hess}\nolimits}
\def\qf{\mathop{\rm qf}\nolimits}
\def\vec{\mathop{\rm vec}\nolimits}
\def\veck{\mathop{\rm veck}\nolimits}
\def\E{{\rm E}}
\def\opt{{\rm opt}}
\def\off{{\rm off}}
\def\J{{\rm J}}
\def\N{{\rm N}}
\def\app{{\rm app}}
\def\rand{{\rm rand}}
\providecommand{\norm}[1]{\lVert#1\rVert}
\newtheorem{Prop}{Proposition}[section]
\newtheorem{Prob}{Problem}[section]
\newtheorem{predfn}{Definition}[section]
\newtheorem{prerem}[predfn]{Remark}
\title{Riemannian Newton-type methods for joint diagonalization\\on the Stiefel manifold with application to\\independent component analysis\footnotetext{\textbf{Funding:} This work was funded by JSPS KAKENHI Grant number JP16K17647.}}
\author{Hiroyuki Sato\thanks{Department of Applied Mathematics and Physics, Kyoto University, Kyoto, Japan\newline ({\tt hsato@i.kyoto-u.ac.jp}).}}
\date{}
\begin{document}
\maketitle

\begin{abstract}
The joint approximate diagonalization of non-commuting symmetric matrices is an important process in independent component analysis.
This problem can be formulated as an optimization problem on the Stiefel manifold that can be solved using 
Riemannian optimization techniques.
Among the available optimization techniques, this study utilizes the Riemannian Newton's method for the joint diagonalization problem on the Stiefel manifold, which has quadratic convergence.
In particular, the resultant Newton's equation can be effectively solved by means of the Kronecker product and the vec and veck operators, which reduce the dimension of the equation to that of the Stiefel manifold.
Numerical experiments are performed to show that the proposed method improves the accuracy of the approximate solution to this problem.
The proposed method is also applied to independent component analysis for the image separation problem.
The proposed Newton method further leads to a novel and fast Riemannian trust-region Newton method for the joint diagonalization problem.
\end{abstract}

\noindent {\bf Keywords:}
joint diagonalization; Riemannian optimization; Newton's method; Stiefel manifold; independent component analysis

\section{Introduction}
The joint diagonalization (JD) problem for $N$ real $n\times n$ symmetric matrices $A_1,A_2,\ldots,A_N$ is often considered on the orthogonal group $O(n)$.
The problem is to find an $n\times n$ orthogonal matrix $X$ that minimizes the sum of the squared off-diagonal elements, or equivalently, maximizes the sum of the squared diagonal elements of $X^TA_lX,\ l=1,2,\ldots,N$ \cite{cardoso1993blind}.
For more information regarding finding non-orthogonal matrices, see \cite{cardoso1993blind}.
The solution to the JD problem is valuable for independent component analysis (ICA) and the blind source separation problem  \cite{afsari2004some,cardoso1999high,cardoso1993blind,cichocki2002adaptive,Theis:2009:SDR:1532023.1532070,comon1994independent,moreau2001generalization} because solving the JD problem leads to the diagonalization of cumulant matrices of signals.

Until now, several approaches have been proposed in the context of Jacobi methods \cite{bunse1993numerical,cardoso1999high,cardoso1993blind} and Riemannian optimization \cite{Theis:2009:SDR:1532023.1532070,yamada2003orthogonal}.
In \cite{Theis:2009:SDR:1532023.1532070}, the JD problem is considered on the Stiefel manifold $\St(p,n):=\left\{Y\in\mathbb R^{n\times p}\,|\,Y^TY=I_p\right\}$ with $p\le n$.
That is, the required matrix is a rectangular matrix whose columns are orthonormal.
The orthogonal group $O(n)$ is a special case of the Stiefel manifold because $O(n)=\St(n,n)$.

Riemannian optimization refers to optimization on Riemannian manifolds.
Unconstrained optimization methods in Euclidean space such as steepest descent, conjugate gradient, and Newton's methods have been generalized to those on Riemannian manifolds \cite{AbsMahSep2008,edelman1998geometry,helmke1994optimization,smith1994optimization}.
In applying such methods,
Manopt, a MATLAB toolbox for optimization on manifolds, is available~\cite{manopt}.
In \cite{Theis:2009:SDR:1532023.1532070}, the Riemannian trust-region method is applied to the JD problem on the Stiefel manifold $\St(p,n)$.
With $Y$ varying on $\St(p,n)$ with $p<n$, minimizing the sum of the squared off-diagonal elements of $Y^TA_lY,\ l=1,2,\ldots,N$ is no longer equivalent to maximizing the sum of the squared diagonal elements.
According to \cite{Theis:2009:SDR:1532023.1532070}, the JD problem on the Stiefel manifold maximizes the sum of the squared diagonal elements of $Y^TA_lY,\ l=1,2,\ldots,N$ with $Y\in\St(p,n)$.

This study considers Newton's method for the JD problem on the Stiefel manifold.
The Hessian of the objective function is fundamental to deriving Newton's equation, which is the key equation in Newton's method.
We intensively examine the Hessian to efficiently solve Newton's equation.
In particular, we use the Kronecker product and the vec and veck operators to reduce the dimension of Newton's equation and to transform the equation into the standard form of $Ax=b$.

This paper is organized as follows.
We introduce the JD problem on the Stiefel manifold in Section \ref{JD_sec} as a continuation of  \cite{Theis:2009:SDR:1532023.1532070}.
In Section \ref{newton_sec}, we consider Newton's equation, which is directly obtained by substituting the gradient and Hessian formulas from Section \ref{JD_sec} into $\Hess f(Y)[\xi]=-\grad f(Y)$.
To derive the representation matrix formula of the Hessian of the objective function, we use the Kronecker product and the vec and veck operators.
This results in a smaller equation that is easier to solve.
In particular, the dimension of the resultant equation is equal to the dimension of the Stiefel manifold in question.
Thus, the equation can be solved more efficiently than the original Newton's equation.
Section \ref{ICA_sec} provides information about the three types of numerical experiments used to evaluate our method.
The first experiment is an application to ICA wherein we demonstrate that the proposed method improves the accuracy of an approximate solution to appropriately estimate the source signals.
The second experiment uses larger problems to show that sequences generated by the proposed Newton's method converge quadratically
and give better solutions than those generated by a Jacobi-like method \cite{cardoso1996jacobi}, which is an existing method for the JD problem.
Furthermore, we propose the trust-region method based on our discussion in Section \ref{newton_sec}.
The last experiment compares the proposed trust-region method with the existing method \cite{Theis:2009:SDR:1532023.1532070} and shows 
that the proposed trust-region method converges faster. Section \ref{Conclusion_sec} contains our concluding remarks.
In this study, the Stiefel manifold is endowed with the induced metric from the natural inner product in ambient Euclidean space.
In contrast to this, we derive another formula for the representation matrix of the Hessian in Appendix \ref{canonical} in which the Stiefel manifold is endowed with the canonical metric.
The resultant representation matrix under the canonical metric is symmetric, while the representation matrix with respect to the induced metric is not symmetric.

Throughout the paper, we use the following notation.
The identity matrix of size $n$ is denoted by $I_n$.
For an arbitrary matrix $W = (w_{ij})$, $W^T$ denotes the transposition of $W$ and
$\norm{W}_F := \sqrt{\sum_{i,j}w_{ij}^2}$ is the Frobenius norm of $W$.
Assume that $W$ is a square matrix;
$\sym(W) := \left(W+W^T\right)/2$ and $\skew(W) := \left(W-W^T\right)/2$ denote the symmetric and  skew-symmetric parts of $W$, respectively;
$\diag(W)$ denotes the diagonal part of $W$, that is, the $(i,j)$-th component of $\diag(W)$ is $w_{ij}\delta_{ij}$, where $\delta_{ij}$ is the Kronecker delta.
For a manifold $\mathcal{M}$, the tangent space of $\mathcal{M}$ at $x \in \mathcal{M}$ is denoted by $T_x \mathcal{M}$.
For manifolds $\mathcal{M}$ and $\mathcal{N}$ and a mapping $F: \mathcal{M} \to \mathcal{N}$, the differential of $F$ at $x \in \mathcal{M}$ is denoted by $\D F(x)$, which is a mapping from $T_x \mathcal{M}$ to $T_{F(x)}\mathcal{N}$.
The gradient and Hessian of $F$ are denoted by $\grad F$ and $\Hess F$, respectively.

\section{Joint diagonalization problem on the Stiefel manifold}\label{JD_sec}
Let $A_1,A_2,\ldots,A_N$ be $N$ real $n\times n$ symmetric matrices.
We consider the following JD problem on the Stiefel manifold $\St(p,n)$ according to \cite{Theis:2009:SDR:1532023.1532070}:

\begin{Prob}\label{JDprob}
\vspace{-\baselineskip}
\begin{align}
{\rm minimize} \,\,\,\,\,& f(Y)=-\sum_{l=1}^N\norm{\diag(Y^TA_lY)}_F^2,\label{objective}\\
{\rm subject\,\,to} \,\,\,\,\,& Y\in \St(p,n),
\end{align}
\end{Prob}
\noindent
where $\St(p,n)=\left\{Y\in\mathbb R^{n\times p}\,|\, Y^TY=I_p\right\}$ with $p\le n$.

The Hessian $\Hess f$ of $f$ is fundamental to applying Newton's method to Problem \ref{JDprob}.
To derive and analyze the Hessian and other requisites, we first review the geometry of $\St(p,n)$ as discussed in \cite{AbsMahSep2008,edelman1998geometry}.

The tangent space $T_Y\!\St(p,n)$ of $\St(p,n)$ at $Y\in\St(p,n)$ is
\begin{equation}
T_Y\!\St(p,n)=\left\{\xi\in\mathbb R^{n\times p}\,|\,\xi^TY+Y^T\xi=0\right\}.\label{tan1}
\end{equation}
In later sections, we will utilize the equivalent form \cite{edelman1998geometry}
\begin{equation}
T_Y\!\St(p,n)=\left\{YB+Y_{\perp}C\,|\,B\in\Skew(p),\ C\in\mathbb R^{(n-p)\times p}\right\},\label{tan2}
\end{equation}
rather than Eq.~\eqref{tan1}, where $Y_{\perp}$ is an arbitrary $n\times (n-p)$ matrix that satisfies $Y^TY_{\perp}=0$ and $Y_{\perp}^TY_{\perp}=I_{n-p}$, and $\Skew(p)$ denotes the set of all $p\times p$ skew-symmetric matrices.
Here, we note
\begin{equation}
K:=\dim(\St(p,n))=\frac{p(p-1)}{2}+p(n-p)=\dim(\Skew(p))+\dim(\mathbb R^{(n-p)\times p})
\end{equation}
is an important relation for rewriting Newton's equation into a system of $K$ linear equations.

Since $\St(p,n)$ is a submanifold of the matrix Euclidean space $\mathbb R^{n\times p}$, it can be endowed with the Riemannian metric
\begin{equation}
\langle \xi_1,\xi_2\rangle_Y:=\tr\left(\xi_1^T\xi_2\right),\qquad\xi_1,\xi_2\in T_Y\!\St(p,n),\label{St_met}
\end{equation}
which is induced from the natural inner product in $\mathbb R^{n\times p}$.
We view $\St(p,n)$ as a Riemannian submanifold of $\mathbb R^{n\times p}$ with the above metric.
Under this metric, the orthogonal projection $P_Y$ at $Y$ onto $T_Y\!\St(p,n)$ is expressed as
\begin{equation}
P_Y(W)=W-Y\sym\left(Y^TW\right),\qquad Y\in\St(p,n),\ W\in \mathbb R^{n\times p}.\label{projection}
\end{equation}

In optimization algorithms in the Euclidean space, a line search is performed after computing the search direction.
In Riemannian optimization, the concept of a straight line is replaced with a curve (not necessarily geodesic) on a general Riemannian manifold.
A retraction on the manifold in question is required to implement Riemannian optimization algorithms \cite{AbsMahSep2008}.
The retraction  defines an appropriate curve for searching for the next iteration point  on the manifold.
We will use the QR retraction $R$ on the Stiefel manifold $\St(p,n)$ \cite{AbsMahSep2008}, which is defined as
\begin{equation}
R_Y(\xi)=\qf(Y+\xi),\qquad Y\in\St(p,n),\ \xi\in T_Y\!\St(p,n),
\end{equation}
where $\qf(\cdot)$ denotes the Q factor of the QR decomposition of the matrix.
In other words, if a full-rank matrix $W\in\mathbb R^{n\times p}$ is uniquely decomposed into $W=QR$, where $Q\in\St(p,n)$ and $R$ is a $p\times p$ upper triangular matrix with strictly positive diagonal entries, then $\qf(W)=Q$.

To describe Newton's equation for Problem \ref{JDprob}, we need the gradient $\grad f$ and Hessian $\Hess f$ of the objective function $f$ on $\St(p,n)$.
Newton's equation is defined at each $Y\in \St(p,n)$ as $\Hess f(Y)[\xi]=-\grad f(Y)$,
where $\xi\in T_Y\!\St(p,n)$ is an unknown tangent vector.

Let $\bar f$ be the function on $\mathbb R^{n\times p}$ defined in the same way as the right-hand side of \eqref{objective}.
We note that $f$ is the restriction of $\bar f$ to $\St(p,n)$.
Expressions for the gradient and Hessian are given in \cite{Theis:2009:SDR:1532023.1532070} as
\begin{equation}
\grad f(Y)=P_Y(\grad \bar f(Y)),
\end{equation}
and
\begin{align}
\Hess f(Y)[\xi]=&P_Y\left(\D (\grad f)(Y)[\xi]\right)\\
=&P_Y(\D(\grad \bar f)(Y)[\xi]-\xi\sym\left(Y^T\grad \bar f(Y)\right)),
\end{align}
where $\grad \bar f$ is the Euclidean gradient of $\bar f$ on $\mathbb R^{n\times p}$, which is computed as
\begin{equation}
\grad \bar f(Y)=-4\sum_{l=1}^N A_lY\diag\left(Y^TA_lY\right),
\end{equation}
and where the Frech\'{e}t derivative $\D(\grad \bar f)(Y)[\xi]$ is written as
\begin{equation}
\D(\grad \bar f)(Y)[\xi]=-4\sum_{l=1}^N(A_l\xi\diag(Y^TA_lY)+2A_lY\diag(Y^TA_l\xi)).
\end{equation}
Thus, we obtain a more concrete expression for $\Hess f(Y)[\xi]$:
\begin{align}
\Hess f(Y)[\xi]=&-4\sum_{l=1}^NP_Y\left(A_l\xi\diag(Y^TA_lY)+2A_lY\diag(Y^TA_l\xi)\right.\notag\\
&\left.\qquad\qquad\qquad-\xi\sym(Y^TA_lY\diag(Y^TA_lY)) \right).\label{standardHess}
\end{align}

\section{Newton's method for the joint diagonalization problem on the Stiefel manifold}\label{newton_sec}
\subsection{The Hessian of the objective function as a linear transformation on $\Skew(p)\times \mathbb R^{(n-p)\times p}$}
Since we have already obtained the matrix expressions of $\grad f(Y)$ and $\Hess f(Y)[\xi]$ in Section~\ref{JD_sec}, Newton's equation for Problem \ref{JDprob} at $Y\in\St(p,n)$, $\Hess f(Y)[\xi]=-\grad f(Y)$, is written as
\begin{align}
&-4\sum_{l=1}^NP_Y\left(A_l\xi\diag(Y^TA_lY)+2A_lY\diag(Y^TA_l\xi)-\xi\sym(Y^TA_lY\diag(Y^TA_lY))\right)\notag\\
&=4\sum_{l=1}^NP_Y\left(A_lY\diag(Y^TA_lY)\right).\label{Newton}
\end{align}
This equation must be solved for $\xi\in T_Y\!\St(p,n)$ given $Y$.
Equation \eqref{Newton} is complicated and difficult to solve because $\xi$ is an $n\times p$ matrix with $K:=p(p-1)/2+p(n-p)\ (<np)$ independent variables.
That is, $\xi$ must satisfy $\xi^TY+Y^T\xi=0$ because $\xi$ is in $T_Y\!\St(p,n)$.

To overcome these difficulties, we wish to obtain the representation matrix of $\Hess f(Y)$ as a linear transformation on $T_Y\!\St(p,n)$ for an arbitrarily fixed $Y$
so that we can rewrite \eqref{Newton} into a standard linear equation.
To this end, we identify $T_Y\!\St(p,n)\simeq \Skew(p)\times \mathbb R^{(n-p)\times p}$ as $\mathbb R^{K}$ and view $\xi$ as a $K$-dimensional vector
using the form in Eq.~\eqref{tan2}.
We arbitrarily fix $Y_{\perp}$ to satisfy $Y^TY_{\perp}=0$ and $Y_{\perp}^TY_{\perp}=I_{n-p}$.
Such $Y_{\perp}$ can be computed by applying the Gram--Schmidt orthonormalization process to $n-p$ linearly independent column vectors of the matrix $I_p-YY^T$.
In practice, we can use MATLAB's \verb+qr+ function to obtain $Y_{\perp}$.
Using function \verb+qr+ with input $Y \in \St(p,n)$ returns an orthogonal matrix $Q \in O(n)$ such that $Y=Q\begin{pmatrix}R_1^T&0\end{pmatrix}^T$, where $R_1$ is a $p\times p$ upper triangular matrix.
If we partition $Q=\begin{pmatrix}Q_1 & Q_2\end{pmatrix}$ with $Q_1\in \St(p,n)$ and $Q_2 \in \St(n-p,n)$, then $Q_2^TY=\begin{pmatrix}0 & I_{n-p}\end{pmatrix}\begin{pmatrix}R_1^T & 0\end{pmatrix}^T=0$.
Therefore, we can choose $Q_2$ as $Y_{\perp}$.

With $Y_{\perp}$, $\xi\in T_Y\!\St(p,n)$ can be expressed as
\begin{equation}
\xi=YB+Y_{\perp}C,\qquad B\in\Skew(p),\ C\in\mathbb R^{(n-p)\times p}.\label{xi_expression}
\end{equation}
Moreover, $\Hess f(Y)[\xi]\in T_Y\!\St(p,n)$ can be written as
\begin{equation}
\Hess f(Y)[\xi]=YB_H+Y_{\perp}C_H, \qquad B_H\in\Skew(p),\ C_H\in\mathbb R^{(n-p)\times p},\label{hess_expression}
\end{equation}
and we can write $B_H$ and $C_H$ using $B$ and $C$.

\begin{Prop}
Let $Y\in\St(p,n)$ and $Y_{\perp}\in\St(n-p,n)$ satisfy  $Y^TY_{\perp}=0$.
If a tangent vector $\xi\in T_Y\!\St(p,n)$ is expressed as \eqref{xi_expression}, then the Hessian $\Hess f(Y)$ of the objective function \eqref{objective} acts on $\xi$ as $\Hess f(Y)[\xi]=YB_H+Y_{\perp}C_H$ with 
\begin{align}
B_H=&-4\sum_{l=1}^N\skew\left((Z_lB+Z_l^{\perp}C)\diag(Z_l)\right.\notag\\
&\left.\qquad\qquad\qquad+2Z_l\diag(Z_lB+Z_l^{\perp}C)-B\sym(Z_l\diag(Z_l)) \right),\label{BH}
\end{align}
and
\begin{align}
C_H=&-4\sum_{l=1}^N\left(((Z_l^{\perp})^TB+Z_l^{\perp\perp}C)\diag(Z_l)\right.\notag\\
&\left.\qquad\qquad\qquad+2(Z_l^{\perp})^T\diag(Z_lB+Z_l^{\perp}C)-C\sym(Z_l\diag(Z_l))\right),\label{CH}
\end{align}
where we have defined $Z_l=Y^TA_lY$, $Z_l^{\perp}=Y^TA_lY_{\perp}$, and $Z_l^{\perp\perp}=Y_{\perp}^TA_lY_{\perp}$.
\end{Prop}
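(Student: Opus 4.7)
The idea is to avoid manipulating the full $n\times p$ Hessian expression directly and instead read off $B_H$ and $C_H$ by left-multiplying by $Y^T$ and $Y_\perp^T$, respectively. From the decomposition \eqref{hess_expression} together with $Y^TY=I_p$, $Y^TY_\perp=0$, and $Y_\perp^T Y_\perp=I_{n-p}$, the coefficients are uniquely determined as
\[
B_H=Y^T\Hess f(Y)[\xi],\qquad C_H=Y_\perp^T\Hess f(Y)[\xi];
\]
the former lies in $\Skew(p)$ automatically because $\Hess f(Y)[\xi]\in T_Y\!\St(p,n)$ satisfies the tangency condition \eqref{tan1}.

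The first step is to record how $Y^T$ and $Y_\perp^T$ interact with the projection \eqref{projection}: for any $W\in\mathbb R^{n\times p}$,
\[
Y^T P_Y(W)=Y^TW-\sym(Y^TW)=\skew(Y^TW),\qquad Y_\perp^T P_Y(W)=Y_\perp^TW.
\]
Applied termwise to the explicit Hessian formula derived just before the proposition, these identities strip off the outer $P_Y$ and reduce the proof to evaluating the action of $Y^T$ and $Y_\perp^T$ on three matrix products of the form $A_l\xi\diag(Y^TA_lY)$, $A_lY\diag(Y^TA_l\xi)$, and $\xi\sym(Y^TA_lY\diag(Y^TA_lY))$.

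The second step is to substitute $\xi=YB+Y_\perp C$ and invoke the symmetry of each $A_l$ (which gives $Y_\perp^TA_lY=(Y^TA_lY_\perp)^T=(Z_l^\perp)^T$) to obtain the basic identities
\[
Y^TA_l\xi=Z_lB+Z_l^\perp C,\qquad Y_\perp^TA_l\xi=(Z_l^\perp)^TB+Z_l^{\perp\perp}C,\qquad Y^T\xi=B,\qquad Y_\perp^T\xi=C,
\]
together with $Y^TA_lY=Z_l$ and $Y_\perp^TA_lY=(Z_l^\perp)^T$. Plugging these into the three term types and collecting the $\skew(\cdot)$ on the $Y^T$-side yields precisely \eqref{BH}, while the $Y_\perp^T$-side gives \eqref{CH} directly.

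The manipulations are routine substitutions once the two projection identities above are in hand, so I do not anticipate any serious obstacle. The only place to be careful is the middle term: the shared factor $\diag(Y^TA_l\xi)=\diag(Z_lB+Z_l^\perp C)$ sits inside both $B_H$ (multiplied on the left by $2Z_l$) and $C_H$ (multiplied on the left by $2(Z_l^\perp)^T$), and this mild asymmetry between the two formulas is the only spot where a transpose or sign slip could sneak in.
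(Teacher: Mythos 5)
Your proposal is correct and follows essentially the same route as the paper's own proof: extracting $B_H=Y^T\Hess f(Y)[\xi]$ and $C_H=Y_\perp^T\Hess f(Y)[\xi]$, using $Y^TP_Y(W)=\skew(Y^TW)$ and $Y_\perp^TP_Y(W)=Y_\perp^TW$ to strip the projection, and then substituting $\xi=YB+Y_\perp C$ termwise. The substitution identities and the handling of the middle term are all consistent with the paper's computation.
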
 
\begin{proof}
We first note that $Y^TP_Y(W)=\skew(Y^TW)$ and $Y_{\perp}^TP_Y(W)=Y_{\perp}^TW$ for any $W \in \mathbb R^{n\times p}$.
Multiplying Eq.~\eqref{hess_expression} by $Y^T$ from the left and using the relations $Y^TY = I_p$ and $Y^TY_{\perp} = 0$ yields
\begin{align}
&B_H=Y^T\Hess f(Y)[\xi]\notag\\
=&-4\sum_{l=1}^N\skew\left(Y^T(A_l\xi\diag(Y^TA_lY)+2A_lY\diag(Y^TA_l\xi)\right.\notag\\
&\left.\qquad\qquad\qquad-\xi\sym(Y^TA_lY\diag(Y^TA_lY))) \right).\label{BHproof}
\end{align}
Similarly, we multiply \eqref{hess_expression} by $Y_{\perp}^T$ from the left to obtain
\begin{align}
&C_H=Y_{\perp}^T\Hess f(Y)[\xi]\notag\\
=&-4\sum_{l=1}^NY_{\perp}^T\left(A_l\xi\diag(Y^TA_lY)+2A_lY\diag(Y^TA_l\xi)\right.\notag\\
&\left.\qquad\qquad\qquad-\xi\sym(Y^TA_lY\diag(Y^TA_lY)) \right).\label{CHproof}
\end{align}
Eqs.~\eqref{BH} and \eqref{CH} immediately follow from \eqref{xi_expression}, \eqref{BHproof}, and \eqref{CHproof}.
\end{proof}

\subsection{Kronecker product and the vec and veck operators}\label{Subsec_Kronecker}
The vec operator and the Kronecker product are useful for rewriting a matrix equation by  transforming the matrix into an unknown column vector \cite{neudecker1999matrix, schott2005matrix}.
The vec operator $\vec(\cdot)$ acts on a matrix $W=(w_{ij})\in\mathbb R^{m\times n}$ as
\begin{equation}
\vec(W)=\begin{pmatrix}w_{11},\ldots,w_{m1},w_{12},\ldots,w_{m2},\ldots,w_{1n},\ldots,w_{mn}\end{pmatrix}^T.
\end{equation}
That is, $\vec(W)$ is an $mn$-dimensional column vector obtained by vertically stacking the columns of $W$. 
The Kronecker product of $U\in\mathbb R^{m\times n}$ and $V\in\mathbb R^{p\times q}$ (denoted by $U\otimes V$) is an $mp\times nq$ matrix defined as
\begin{equation}
U\otimes V=\begin{pmatrix}u_{11}V&\cdots&u_{1n}V\\ \vdots & \ddots & \vdots\\ u_{m1}V &\cdots& u_{mn}V\end{pmatrix}.
\end{equation}

The following useful properties of these operators are known:
\begin{itemize}
\item
For $U\in\mathbb R^{m\times p},\ V\in\mathbb R^{p\times q}$, and $\ W\in\mathbb R^{q\times n}$,
\begin{equation}
\vec(UVW)=(W^T\otimes U)\vec(V).
\end{equation}
\item
There exists an $n^2\times n^2$ permutation matrix $T_{n}$ such that
\begin{equation}
\vec(W^T)=T_{n}\vec(W),\qquad W\in\mathbb R^{n\times n}.\label{tneq}
\end{equation}
Specifically, $T_{n}$ is given by
\begin{equation}
T_{n}=\sum_{i,j=1}^n E^{(n\times n)}_{ij}\otimes E^{(n\times n)}_{ji},\label{Tn}
\end{equation}
where $E^{(p\times q)}_{ij}$ denotes the $p\times q$ matrix that has the $(i,j)$-component equal to $1$ and all other components equal to $0$.
\end{itemize}

Furthermore, we can easily derive the following properties:
\begin{itemize}
\item
For $W\in\mathbb R^{n\times n}$,
\begin{equation}
\vec(\sym(W))=\frac{1}{2}(I_{n^2}+T_n)\vec(W),\quad \vec(\skew(W))=\frac{1}{2}(I_{n^2}-T_n)\vec(W).\label{vecsym}
\end{equation}
\item
Let $\Delta_n$ be an $n^2\times n^2$ diagonal matrix defined by $\Delta_n=\sum_{i=1}^n E^{(n\times n)}_{ii}\otimes E^{(n\times n)}_{ii}$. We have
\begin{equation}
\vec(\diag(W))=\Delta_n\vec(W),\qquad W\in\mathbb R^{n\times n}.
\end{equation}
\end{itemize}

For $C\in\mathbb R^{(n-p)\times p}$ in Eq.~\eqref{xi_expression}, $\vec(C)$ is an appropriate vector expression of $C$ because all elements of $C$ are independent variables.
On the other hand, for $B\in\Skew(p)$ in Eq.~\eqref{xi_expression}, $\vec(B)$ contains $p$ zeros stemming from the diagonal elements of $B$, which should be removed.
In addition, $\vec(B)$ contains duplicates of each independent variable because the upper triangular part (excluding the diagonal) of $B$ is the negative of the lower triangular part.
Therefore, we use the veck operator \cite{grafarend2006linear}.
The veck operator $\veck(\cdot)$ acts on $n\times n$ skew-symmetric matrix $S$ as
\begin{equation}
\veck(S)=\begin{pmatrix}s_{21},\ldots, s_{n1}, s_{32}, \ldots, s_{n2}, \ldots, s_{n,n-1}\end{pmatrix}^T.
\end{equation}
That is, $\veck(S)$ is an $n(n-1)/2$-dimensional column vector obtained by stacking the columns of the lower triangular part of $S$.
Let $D_n$ be an $n^2\times n(n-1)/2$ matrix defined by
\begin{equation}
D_n=\sum_{n\ge i>j\ge 1}\left(E^{(n^2\times n(n-1)/2)}_{n(j-1)+i,\ j(n-(j+1)/2)-n+i}-E^{(n^2\times n(n-1)/2)}_{n(i-1)+j,\ j(n-(j+1)/2)-n+i}\right).\label{Dn}
\end{equation}
Then, $D_n$ only depends on $n$ (the size of $S$) and satisfies
\begin{equation}
\vec(S)=D_n\veck(S).\label{Seq}
\end{equation}
Note that Eq.~\eqref{Seq} is valid only if $S$ is skew-symmetric.
Because each column of $D_n$ contains exactly one $1$ and one $-1$ and because each row of $D_n$ has at most one non-zero element, we have $D_n^TD_n=2I_{n(n-1)/2}$.
It follows that
\begin{equation}
\veck(S)=\frac{1}{2}D_n^T\vec(S).
\end{equation}
Furthermore, since $D_n^T\vec(W)=0$ for any $n\times n$ symmetric matrix $W$, it follows from Eq.~\eqref{vecsym} that
\begin{equation}
D_n^T(I_{n^2}+T_n)\vec(U)=2D_n^T\vec(\sym(U))=0
\end{equation}
for an arbitrary matrix $U$.
Since $U$ is arbitrary, $\vec(U)$ is also an arbitrary $n^2$-dimensional column vector. Therefore, we have $D_n^T(I_{n^2}+T_n)=0$, that is,
\begin{equation}
D_n^T=-D_n^T T_n.\label{dp_eq}
\end{equation}

\subsection{Representation matrix of the Hessian and Newton's equation}
We regard the Hessian $\Hess f(Y)$ at $Y\in\St(p,n)$ as a linear transformation $H$ on $\mathbb R^{K}$ that transforms a $K$-dimensional vector $\begin{pmatrix}\veck(B)^T & \vec(C)^T\end{pmatrix}^T$ into $\begin{pmatrix}\veck(B_H)^T & \vec(C_H)^T\end{pmatrix}^T$,
where $\xi=YB+Y_{\perp}C$ and $\Hess f(Y)[\xi]=YB_H+Y_{\perp}C_H$.
Our goal is to obtain the representation matrix of $H$.

\begin{Prop}\label{prop:rep}
Let $K:=\dim\St(p,n)=p(p-1)/2+p(n-p)$.
Let $H$ be a linear transformation on $\mathbb R^{K}$ that acts on $\begin{pmatrix}\veck(B)^T & \vec(C)^T\end{pmatrix}^T$ with $B\in \Skew(p), C\in\mathbb R^{p(n-p)}$ as
\begin{equation}
H\begin{pmatrix}\veck(B)\\ \vec(C)\end{pmatrix}=\begin{pmatrix}\veck(B_H)\\ \vec(C_H)\end{pmatrix},
\end{equation}
where $B_H$ and $C_H$ are given in Eqs.~\eqref{BH} and \eqref{CH}.
Then, the representation matrix $H_A$ of $H$ is given by 
\begin{equation}
H_A=\begin{pmatrix}H_{11}&H_{12}\\H_{21}&H_{22}\end{pmatrix},\label{HA}
\end{equation}
where
\begin{align}
H_{11}=&-2D_p^T\sum_{l=1}^N\left(\diag(Z_l)\otimes Z_l+2(I_p\otimes Z_l)\Delta_p (I_p\otimes Z_l)\right.\notag\\
&\left.\quad\quad\quad\quad\quad-\sym(Z_l\diag(Z_l))\otimes I_p\right)D_p,\label{H11}\\
H_{12}=&-2D_p^T\sum_{l=1}^N\left(\diag(Z_l)\otimes Z_l^{\perp}+2(I_p\otimes Z_l)\Delta_p(I_p\otimes Z_l^{\perp})\right),\label{H12}\\
H_{21}=&-4\sum_{l=1}^N\left((\diag(Z_l)\otimes (Z_l^{\perp})^T+2(I_p\otimes (Z_l^{\perp})^T)\Delta_p(I_p\otimes Z_l)\right)D_p,\label{H21}\\
H_{22}=&-4\sum_{l=1}^N\left(\diag(Z_l)\otimes Z_l^{\perp\perp}+2(I_p\otimes (Z_l^{\perp})^T)\Delta_p(I_p\otimes Z_l^{\perp})\right.\notag\\
&\quad\quad\quad\quad\quad-\sym(Z_l\diag(Z_l))\otimes I_{n-p}\Bigr).\label{H22}
\end{align}
\end{Prop}

\begin{proof}
From Eqs.~\eqref{BH} and \eqref{CH} together with Eq.~\eqref{dp_eq}, $\veck(B_H)$ and $\vec(C_H)$ are calculated as follows:
\begin{align}
&\veck(B_H)\notag\\
=&\frac{1}{2}D_p^T\vec\left(-4\sum_{l=1}^N\skew\left((Z_lB+Z_l^{\perp}C)\diag(Z_l)+2Z_l\diag(Z_lB+Z_l^{\perp}C)\right.\right.\notag\\
&\quad\quad\quad\quad\quad-B\sym(Z_l\diag(Z_l)) \Bigr)\Biggr)\notag\\
=&-D_p^T(I_{p^2}-T_p)\sum_{l=1}^N\left((\diag(Z_l)\otimes Z_l)\vec(B)+(\diag(Z_l)\otimes Z_l^{\perp})\vec(C)\right.\notag\\
&\left.\qquad\quad\quad+2(I_p\otimes Z_l)\vec(\diag(Z_lB+Z_l^{\perp}C))-(\sym(Z_l\diag(Z_l))\otimes I_p)\vec(B) \right)\notag\\
=&H_{11}\veck(B)+H_{12}\vec(C),
\end{align}
and
\begin{align}
&\vec(C_H)\notag\\
=&\vec\left(-4\sum_{l=1}^N\left(((Z_l^{\perp})^TB+Z_l^{\perp\perp}C)\diag(Z_l)+2(Z_l^{\perp})^T\diag(Z_lB+Z_l^{\perp}C)\right.\right.\notag\\
&\quad\quad\quad\quad\quad-C\sym(Z_l\diag(Z_l))\Bigr)\Biggr)\notag\\
=&-4\sum_{l=1}^N\left((\diag(Z_l)\otimes (Z_l^{\perp})^T)\vec(B)+(\diag(Z_l)\otimes Z_l^{\perp\perp})\vec(C)\right.\notag\\
& \left. \quad+2(I_p\otimes (Z_l^{\perp})^T)\vec(\diag(Z_lB+Z_l^{\perp}C))-(\sym(Z_l\diag(Z_l))\otimes I_{n-p})\vec(C)\right)\notag\\
=&H_{21}\veck(B)+H_{22}\vec(C).
\end{align}
This completes the proof.
\end{proof}

Thus, Newton's equation, $\Hess f(Y)[\xi]=-\grad f(Y)$, can be solved by the following method.
We first note that Newton's equation is equivalent to
\begin{align}
\begin{cases}
&Y^T\Hess f(Y)[\xi]=-Y^T\grad f(Y),\\
&Y_{\perp}^T\Hess f(Y)[\xi]=-Y_{\perp}^T\grad f(Y).\label{newtoneq}
\end{cases}
\end{align}
Applying the veck operator to the first equation of Eq.~\eqref{newtoneq} and applying the vec operator to the second equation, yields 
\begin{equation}
H_A\begin{pmatrix}\veck(B)\\ \vec(C)\end{pmatrix}=-\begin{pmatrix}\veck(Y^T\grad f(Y))\\ \vec(Y_{\perp}^T\grad f(Y))\end{pmatrix},\label{rep_new_eq}
\end{equation}
where $\xi=YB+Y_{\perp}C$ with $B\in\Skew(p)$ and $C\in\mathbb R^{(n-p)\times p}$.
If $H_A$ is invertible, we can solve Eq.~\eqref{rep_new_eq} as
\begin{equation}
\begin{pmatrix}\veck(B)\\ \vec(C)\end{pmatrix}=-H_A^{-1}\begin{pmatrix}\veck(Y^T\grad f(Y))\\ \vec(Y_{\perp}^T\grad f(Y))\end{pmatrix}.
\end{equation}
After we have obtained $\veck(B)$ and $\vec(C)$, we can easily reshape $B\in\Skew(p)$ and $C\in\mathbb R^{(n-p)\times p}$.
Therefore, we can calculate the solution $\xi=YB+Y_{\perp}C$ of Newton's equation \eqref{Newton}.

\subsection{Newton's method}\label{Subsec_Newton}
If the block matrices $H_{11}, H_{12}, H_{21},$ and $H_{22}$ of $H_A$ are related, we may reduce the computational cost of computing $H_A$.
Furthermore, if $H_A$ is symmetric, we can apply an efficient Krylov subspace method, e.g., the conjugate residual method \cite{saad2003iterative}.
The Hessian $\Hess f(Y)$ is symmetric with respect to the metric $\langle\cdot,\cdot\rangle_Y$.
However, the representation matrix $H_A$ is not always symmetric.
For $\xi=YB_1+Y_{\perp}C_1$ and $\eta=YB_2+Y_{\perp}C_2$ with $B_1, B_2\in\Skew(p)$ and $C_1, C_2\in \mathbb R^{(n-p)\times p}$, we have
\begin{equation}
\langle \xi, \eta \rangle_Y=\tr(B_1^TB_2)+\tr(C_1^TC_2)=2\veck(B_1)^T\veck(B_2)+\vec(C_1)^T\vec(C_2),\label{quadBC}
\end{equation}
so that the independent coordinates of $B_1$ and $B_2$ are counted twice.
If we endowed $\St(p,n)$ with the canonical metric \cite{edelman1998geometry}, the representation matrix would be symmetric (see Appendix \ref{canonical} for more details).

Although the representation matrix $H_A$ with the induced metric is not symmetric, it does satisfy the following proposition.
\begin{Prop}
The block matrices $H_{11}, H_{12}, H_{21},$ and $H_{22}$ defined by \eqref{H11}, \eqref{H12}, \eqref{H21}, and \eqref{H22}, respectively, satisfy
\begin{equation}
H_{11}=H_{11}^T,\quad H_{21}=2H_{12}^T,\quad H_{22}=H_{22}^T.\label{relation}
\end{equation}
\end{Prop}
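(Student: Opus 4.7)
The plan is to verify the three identities in \eqref{relation} by direct algebraic manipulation of the block formulas, relying on three families of facts: (i) the symmetries of the matrices $Z_l$ and $Z_l^{\perp\perp}$ and the ``transpose swap'' $(Z_l^{\perp})^T$; (ii) the Kronecker-product transpose rule $(U\otimes V)^T=U^T\otimes V^T$ together with symmetry of the auxiliary matrices $\Delta_p$ and $T_p$; and (iii) a simplification of the prefactor $D_p^T(I_{p^2}-T_p)$ that appears in $H_{11}$ and $H_{12}$.

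First I would establish the key lemma $T_p D_p=-D_p$, hence $D_p^T(I_{p^2}-T_p)=2D_p^T$. This follows because, for any $B\in\Skew(p)$, $T_p D_p\veck(B)=T_p\vec(B)=\vec(B^T)=-\vec(B)=-D_p\veck(B)$, so the matrix identity holds on a spanning set of $\mathbb R^{p(p-1)/2}$; combining with $T_p^T=T_p$ gives the simplified prefactor. With this in hand, $H_{11}$ and $H_{12}$ can be rewritten without the projector $(I_{p^2}-T_p)$, which makes the comparisons with $H_{11}^T$ and $H_{21}/2$ much cleaner.

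Next, for $H_{11}=H_{11}^T$ and $H_{22}=H_{22}^T$, I would show that every summand inside the $\sum_{l=1}^N$ is a symmetric matrix, so that transposing leaves the overall expression unchanged (using that $D_p^T M D_p$ is symmetric whenever $M$ is). Symmetry of $\diag(Z_l)\otimes Z_l$, $\diag(Z_l)\otimes Z_l^{\perp\perp}$, $\sym(Z_l\diag(Z_l))\otimes I$, and of the two sandwich terms $(I_p\otimes Z_l)\Delta_p(I_p\otimes Z_l)$ and $(I_p\otimes(Z_l^{\perp})^T)\Delta_p(I_p\otimes Z_l^{\perp})$, follows directly from $Z_l^T=Z_l$, $(Z_l^{\perp\perp})^T=Z_l^{\perp\perp}$, $\Delta_p^T=\Delta_p$, and the Kronecker transpose rule.

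Finally, for $H_{21}=2H_{12}^T$, I would compute $H_{12}^T$ term by term using the same Kronecker transpose rule: the term $\diag(Z_l)\otimes Z_l^{\perp}$ transposes to $\diag(Z_l)\otimes(Z_l^{\perp})^T$, and the sandwich $(I_p\otimes Z_l)\Delta_p(I_p\otimes Z_l^{\perp})$ transposes to $(I_p\otimes(Z_l^{\perp})^T)\Delta_p(I_p\otimes Z_l)$. After applying the simplified prefactor $D_p^T(I_{p^2}-T_p)=2D_p^T$ so that $H_{12}=-2D_p^T\sum_l(\cdots)$, taking transpose produces a trailing $D_p$, and multiplying by $2$ yields precisely the expression for $H_{21}$ in \eqref{H21}. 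The main (mild) obstacle is the bookkeeping around the factor of $2$: it originates from $D_p^T(I_{p^2}-T_p)=2D_p^T$ acting on the ``$B$'' block but not on the ``$C$'' block, and this asymmetry in the coordinate weighting is exactly what produces the off-diagonal factor of $2$ in $H_{21}=2H_{12}^T$, consistent with the quadratic-form identity \eqref{quadBC} already noted by the author.
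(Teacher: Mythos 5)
Your proof is correct, but it takes a genuinely different route from the one the paper gives. The paper's argument is structural: it invokes the self-adjointness of $\Hess f(Y)$ with respect to the induced metric, writes the metric in the $(\veck(B),\vec(C))$ coordinates as the quadratic form $\mathrm{diag}(2I_{p(p-1)/2},\,I_{p(n-p)})$ from \eqref{quadBC}, and deduces $\begin{pmatrix}2I&0\\0&I\end{pmatrix}H_A=H_A^T\begin{pmatrix}2I&0\\0&I\end{pmatrix}$, from which the three block identities \eqref{relation} fall out immediately; no inspection of the Kronecker formulas \eqref{H11}--\eqref{H22} is needed, and the factor of $2$ is explained as the weight the metric puts on the skew-symmetric block. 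Your route is the direct verification: you first establish $T_pD_p=-D_p$ (equivalently $D_p^TT_p=-D_p^T$, which is exactly the Lemma the paper proves in Appendix \ref{canonical}, though your derivation via $T_pD_p\veck(B)=\vec(B^T)=-D_p\veck(B)$ on the spanning set $\{\veck(B):B\in\Skew(p)\}$ is cleaner than the paper's index bookkeeping), then check summand-by-summand symmetry using $Z_l^T=Z_l$, $(Z_l^{\perp\perp})^T=Z_l^{\perp\perp}$, $\Delta_p^T=\Delta_p$ and $(U\otimes V)^T=U^T\otimes V^T$. All of these checks go through, and your accounting of the factor of $2$ in $H_{21}=2H_{12}^T$ (it comes from $D_p^T(I_{p^2}-T_p)=2D_p^T$ acting only on the $B$-block) is exactly right. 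The paper explicitly acknowledges your route as an alternative (``Eq.~\eqref{relation} can also be shown directly from the formula $D_p^T=-D_p^TT_p$''). What each buys: the paper's argument is shorter, generalizes to any self-adjoint operator in these coordinates, and doubles as a consistency check on the formulas; yours independently verifies the correctness of the explicit expressions \eqref{H11}--\eqref{H22} themselves, which the metric argument cannot do.
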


The result immediately follows from Eqs.~\eqref{H11}--\eqref{H22}.
We now derive \eqref{relation} using another method to clarify how the structure of the representation matrix of the Hessian is inherited from the symmetric structure of the original Hessian.
For the function $f$ defined by \eqref{objective} under the induced metric $\langle\cdot,\cdot\rangle$ on $\St(p,n)$, we have
\begin{equation}
\langle\Hess f(Y)[\xi],\eta\rangle_Y=\langle\Hess f(Y)[\eta],\xi\rangle_Y,\qquad \xi,\eta\in T_Y\!\St(p,n),\label{hessin}
\end{equation}
because $\Hess f(Y)$ is symmetric with respect to the induced metric.
Let $\xi=YB_1+Y_{\perp}C_1$ and $\eta=YB_2+Y_{\perp}C_2$.
From Prop.~\ref{prop:rep} and Eqs.~\eqref{quadBC} and \eqref{hessin}, it follows that
\begin{equation}
\begin{pmatrix}\veck(B_1)\\ \vec(C_1)\end{pmatrix}^T\left(H_A^T\bm{J}_p-\bm{J}_pH_A\right)\begin{pmatrix}\veck(B_2)\\ \vec(C_2)\end{pmatrix}=0,
\end{equation}
where $\bm{J}_p:=\begin{pmatrix}2I_{p(p-1)/2} & 0 \\ 0 & I_{p(n-p)}\end{pmatrix}$.
Hence, we obtain
\begin{equation}
\bm{J}_pH_A=H_A^T\bm{J}_p\label{rep_sym}
\end{equation}
because $\begin{pmatrix}\veck(B_1)^T& \vec(C_1)^T\end{pmatrix}^T$ and $\begin{pmatrix}\veck(B_2)^T & \vec(C_2)^T\end{pmatrix}^T$ can be arbitrary $K$-dimensional vectors.
We can rewrite Eq.~\eqref{rep_sym} using the block matrices of $H_A$ as
\begin{equation}
\label{H_eq}
\begin{pmatrix}2H_{11}& 2H_{12}\\ H_{21} & H_{22}\end{pmatrix}=\begin{pmatrix}2H_{11}^T & H_{21}^T\\ 2H_{12}^T & H_{22}^T\end{pmatrix}.
\end{equation}
Therefore, the block matrices satisfy \eqref{relation}.
Note that this derivation does not depend on the form of function $f$.
Moreover, this result holds for any smooth function on the Stiefel manifold with the induced metric.

Using the QR retraction, we propose Algorithm \ref{Newton_alg} as Newton's method for Problem \ref{JDprob}.

\begin{algorithm}[H]
\caption{Newton's method for Problem \ref{JDprob}}
\label{Newton_alg}
\begin{algorithmic}[1]
\STATE Choose an initial point $Y^{(0)}\in \St(p,n)$.
\FOR{$k=0,1,2,\ldots$}
\STATE Compute $Y^{(k)}_{\perp}$ that satisfies $(Y^{(k)})^TY^{(k)}_{\perp}=0$ and $(Y^{(k)}_{\perp})^TY^{(k)}_{\perp}=I_{n-p}$.
\STATE Compute $Z^{(k)}_l=(Y^{(k)})^TA_lY^{(k)}$, $Z^{\perp(k)}_l=(Y^{(k)})^TA_lY^{(k)}_{\perp}$, and $Z^{\perp\perp(k)}_l=(Y^{(k)}_{\perp})^TA_lY^{(k)}_{\perp}$ for $l=1,2,\ldots,N$.
\STATE Compute $(Y^{(k)})^T\grad f(Y^{(k)})$ and $(Y^{(k)}_{\perp})^T\grad f(Y^{(k)})$ by
\begin{equation}
(Y^{(k)})^T\grad f(Y^{(k)})=-4\skew\left(\sum_{l=1}^N\left(Z^{(k)}_l\diag(Z^{(k)}_l)\right)\right),
\end{equation}
and
\begin{equation}
(Y^{(k)}_{\perp})^T\grad f(Y^{(k)})=-4\sum_{l=1}^N\left(\left(Z^{\perp(k)}_l\right)^T\diag(Z^{(k)}_l)\right).
\end{equation}
\STATE Compute the matrices $H^{(k)}_{11}$, $H^{(k)}_{12}$, and $H^{(k)}_{22}$ using \eqref{H11}, \eqref{H12}, and \eqref{H22}, respectively, with $Z_l=Z_l^{(k)}$, $Z_l^{\perp}=Z_l^{\perp(k)}$, $Z_l^{\perp\perp}=Z_l^{\perp\perp(k)}$, and compute $H_{21}^{(k)}=2(H^{(k)}_{12})^T$.
\STATE Compute $\bm b^{(k)}\in\mathbb R^{p(p-1)/2}$ and $\bm c^{(k)}\in\mathbb R^{p(n-p)}$ using
\begin{equation}
\label{bcsolve}
\begin{pmatrix}\bm b^{(k)}\\ \bm c^{(k)}\end{pmatrix}=-\begin{pmatrix}H^{(k)}_{11}&H^{(k)}_{12}\\H^{(k)}_{21}&H^{(k)}_{22}\end{pmatrix}^{-1}\begin{pmatrix}\veck((Y^{(k)})^T\grad f(Y^{(k)}))\\ \vec((Y^{(k)}_{\perp})^T\grad f(Y^{(k)}))\end{pmatrix}.
\end{equation}
\STATE Construct $B^{(k)}\in\Skew(p)$ and $C^{(k)}\in\mathbb R^{(n-p)\times p}$ that satisfy $\veck(B^{(k)})={\bm b^{(k)}}$ and $\vec(C^{(k)})=\bm c^{(k)}$.
\STATE Compute $\xi^{(k)}=Y^{(k)}B^{(k)}+Y^{(k)}_{\perp}C^{(k)}$.
\STATE Compute the next iteration $Y^{(k+1)}=\qf({Y^{(k)}}+\xi^{(k)})$.
\ENDFOR
\end{algorithmic}
\end{algorithm}
If $n=p$, that is, if we consider the case of the orthogonal group, the relationship $YY^T=I_n$ and the fact that $Y_{\perp}$ is empty simplify the algorithm.

We have thus obtained an algorithm for the JD problem with quadratic convergence based on Riemannian Newton's method.
However, because Newton's method is not guaranteed to have global convergence, we need to prepare an approximate solution to the problem or use another method such as the trust-region method.
We will discuss this in detail in Section \ref{ICA_sec}.

We conclude this section with remarks on some methods for solving Newton's equation and checking the positive definiteness of the Hessian.
First of all, $\Hess f(Y)$ is a symmetric operator with respect to the inner product $\langle \cdot, \cdot \rangle_Y$.
When $\Hess f(Y)$ is positive definite, the conjugate gradient (CG) method can be used to solve the equation if we regard the matrix--vector multiplication in the CG algorithm as the operation of $\Hess f(Y)$ on a tangent vector.
When we solve Newton's equation as a linear equation of standard form, that is, by \eqref{bcsolve},
the representation matrix $H_A$ is not necessarily symmetric.
If the dimension of the problem is large, it is difficult to solve the equation by direct inversion.
In this case, some Krylov subspace methods such as the generalized minimal residual (GMRES) method and biconjugate gradient stabilized (BiCGSTAB) method can be used.
They do not require that the coefficient matrix be symmetric.
Furthermore, we can also derive the representation matrix of the Hessian as a symmetric matrix with respect to another Riemannian metric, which is called the canonical metric in \cite{edelman1998geometry}.
A detailed derivation can be found in Appendix \ref{canonical}.
If the Hessian operator or its representation matrix is symmetric but not positive definite, we can use the conjugate residual (CR) method.
In addition, in all the cases, we can use preconditioning methods to transform the original linear operator to a better-conditioned one if the linear equation to be solved is ill-conditioned, that is, the condition number of the Hessian is large.
We refer to \cite{saad2003iterative} for further details of these Krylov subspace methods.

We now discuss the positive definiteness of the representation matrix $H_A$, which can be used to check whether a critical point obtained by Newton's method is a local minimum.
We note the following relation:
\begin{equation}
\langle\Hess f(Y)[\xi],\xi\rangle_Y=\begin{pmatrix}\veck(B_1)\\ \vec(C_1)\end{pmatrix}^T\bm{J}_p H_A\begin{pmatrix}\veck(B_1)\\ \vec(C_1)\end{pmatrix}.
\end{equation}
Therefore, if the symmetric matrix $\bm{J}_pH_A$ is positive definite, then $\Hess f(Y)$ is positive definite, and $Y$ is a local minimum.
In particular, if $n=p$, the positive definiteness of $H_A$ itself implies that $Y$ is a local minimum.
The matrix $\bm{J}_pH_A$ can be written by using block matrices as the left-hand side of \eqref{H_eq}.
By considering vectors $\begin{pmatrix} z_1^T & 0\end{pmatrix}^T$ and $\begin{pmatrix} 0 & z_2^T\end{pmatrix}^T$, we can easily derive a necessary condition for the positive definiteness of $\bm{J}_pH_A$ as
\begin{equation}
H_{11} \text{ and } H_{22} \text{ are positive definite}.
\end{equation}
We can obtain from this fact an easy way to check the necessity of the positive definiteness as
\begin{equation}
\text{All the diagonal components of $H_{11}$ and $H_{22}$ are positive.}
\end{equation}
We now consider the case where we arrive at a critical point of $f$ and assume that $\bm{J}_pH_A$ is semi-positive definite.
Then, $\bm{J}_pH_A$ is positive definite if and only if $0 \neq \det(\bm{J}_pH_A) = \det(2H_{11})\det(H_{22}-H_{21}(2H_{11})^{-1}(2H_{12}))$.
Therefore, under the assumption of semi-positive definiteness of $\bm{J}_pH_A$, the condition of positive definiteness of $\bm{J}_pH_A$ is necessary and sufficient for
\begin{equation}
\det(H_{11}) \neq 0 \text{ and } \det(H_{22}-H_{21}H_{11}^{-1}H_{12}) \neq 0.
\end{equation}

\section{Numerical experiments and application to independent component analysis}\label{ICA_sec}
In this section, we perform all of the following numerical experiments 
using MATLAB R2014b on a PC with Intel Core i7-4790 3.60 GHz CPU,
16GB of RAM memory, and Windows 8.1 Pro 64-bit operating system.
We deal with the ICA by JD with application to image separation problems in Section \ref{Sec_4_2} and general larger JD problems in Section \ref{Sec_4_3}.
We note that the ICA has wider applications in brain imaging, econometrics, image feature extraction, and so on \cite{hyvarinen2004independent}, where the proposed algorithm can be applied.
\subsection{Independent component analysis and the joint diagonalization problem}\label{Sec_4_1}
The simplest ICA model assumes the existence of $n$ independent signals $s_1(t), s_2(t),$ $\ldots, s_n(t)$.
The observations of $n$ mixtures $x_1(t), x_2(t), \ldots,x_n(t)$ are given by the mixing equation $\bm x(t)=A\bm s(t)$,
where $\bm x(t)=(x_1(t), x_2(t), \ldots,x_n(t))^T$, $\bm s(t)=(s_1(t), s_2(t),$ $\ldots,s_n(t))^T$, and $A$ is an $n\times n$ mixing matrix.
The problem is to recover the source vector $\bm s$ using only the observed data $\bm x$ under the assumption that the entries $s_1, s_2, \ldots,s_n$ of $\bm s$ are mutually independent.
The problem is formulated as the computation of an $n\times n$ matrix $B$, which is called a separating matrix, such that $\bm z(t)=B\bm x(t)$ is an appropriate estimate of the source vector $\bm s(t)$.
In other words, we wish to find $B$ such that the elements $z_1, z_2, \ldots,z_n$ of $\bm z$ are mutually independent.
See \cite{cardoso1998blind} for more details.

The ICA problem is often solved by minimizing an objective function, called a contrast function.
One choice for such a function is the JADE (joint approximate diagonalization of eigen-matrices) contrast function $\phi$, which is the sum of fourth-order cross-cumulants of the elements $z_1,z_2,\ldots,z_n$ of $\bm z$.
We can assume that $\bm x$, and therefore $\bm z$, are zero-mean random variables because we can subtract the mean $\E[\bm x]$ from $\bm x$ if needed.
The fourth-order cumulants $\mathcal{C}_{ijkl}[\bm z]$ of zero-mean random variables $z_i,z_j,z_k,z_l$ can be expressed by
\begin{equation}
\mathcal{C}_{ijkl}[\bm z]=\E[z_iz_jz_kz_l]-\E[z_iz_j]\E[z_kz_l]-\E[z_iz_k]\E[z_jz_l]-\E[z_iz_l]\E[z_jz_k].
\end{equation}
The JADE contrast function $\phi$ of $\bm z$ is then defined as
\begin{equation}
\phi(\bm z)=\sum_{i,j,k,l \atop i\neq j}(\mathcal{C}_{ijkl}[\bm z])^2.
\end{equation}
To reformulate the problem as a JD problem, we define cumulant matrices according to \cite{cardoso1999high,cardoso1993blind}.
The cumulant matrix $Q^{\bm z}(M)$ associated with a given $n\times n$ matrix $M=(m_{ij})$ is defined to have the $(i,j)$-th component
\begin{equation}
(Q^{\bm z}(M))_{ij}=\sum_{k,l=1}^n\mathcal{C}_{ijkl}[\bm z]m_{kl}.
\end{equation}
If we assume that $\bm z$ is whitened, that is, the covariance matrix of $\bm z$ is the identity matrix, then the cumulant matrix $Q^{\bm z}(M)$ can be expressed as
\begin{equation}
Q^{\bm z}(M)=\E[(\bm z^TM\bm z)\bm z \bm z^T]-\tr(M)I_n-M-M^T.
\end{equation}
Owing to the assumption of whiteness, we only have to seek a separating matrix $B$ in the orthogonal group $O(n)$.
Then, using $\bm z=B\bm x$, we can show that \cite{cardoso1999high,cardoso1993blind}
\begin{equation}
\phi(\bm z)=\sum_{k\le l}\norm{\off(Q^{\bm z}(M_{kl}))}_{F}^2=\sum_{k\le l}\norm{\off(BQ^{\bm x}(M_{kl})B^T)}_{F}^2,
\end{equation}
where $\off(\cdot)$ denotes the off-diagonal part of the matrix, and 
\begin{equation}
M_{kl}=\begin{cases}
E^{(n\times n)}_{kl}\qquad \text{if} \quad k=l\\
(E^{(n\times n)}_{kl}+E^{(n\times n)}_{lk})/\sqrt{2}\qquad \text{if} \quad k<l.
\end{cases}
\end{equation}
Therefore, if we set $N:=n(n+1)/2$ matrices $A_1,A_2,\ldots,A_N$ as $Q^{\bm x}(M_{kl}), k\le l$ and define $Y=B^T$, the optimization problem for the JADE contrast is given as follows:
\begin{Prob}\label{ICAprob}
\begin{align}
{\rm minimize} \,\,\,\,\,& g(Y)=\sum_{l=1}^N\norm{\off(Y^TA_lY)}_F^2,\label{ICAobj}\\
{\rm subject\,\,to} \,\,\,\,\,& Y\in O(n).
\end{align}
\end{Prob}
Since $Y\in O(n)$, it follows from $\norm{Y^TA_lY}_F = \norm{A_l}_F$ that
\begin{equation}
\norm{\off(Y^TA_lY)}_F^2=\norm{A_l}_{F}^2-\norm{\diag(Y^TA_lY)}_{F}^2=-\norm{\diag(Y^TA_lY)}_{F}^2+\text{const}.
\end{equation}
Thus, Problem \ref{ICAprob} is equivalent to Problem \ref{JDprob} with $p=n$.
In the next subsection, we apply Algorithm \ref{Newton_alg} to ICA.

\subsection{Application to image separation}\label{Sec_4_2}
ICA can be applied to image separation \cite{farid1999separating}.
Without loss of generality, we can assume the zero-mean property and whiteness.
Specifically, we just need to replace the observed data $\bm{x}$ with $\sqrt{\Lambda}^{-1}P^T(\bm{x}-\E[\bm{x}])$, where $P\Lambda P^T$ is the eigenvalue decomposition of the covariance matrix $\E\left[(\bm{x}-\E[\bm{x}])(\bm{x}-\E[\bm{x}])^T\right]$ of $\bm{x}$ with $P \in O(n)$ and $\Lambda$ being a diagonal matrix.
The zero-mean property $\E\left[\sqrt{\Lambda}^{-1}P^T(\bm{x}-\E[\bm{x}])\right]=0$ is obvious.
We can also verify that the covariance matrix of $\sqrt{\Lambda}^{-1}P^T(\bm{x}-\E[\bm{x}])$ is
\begin{align}
\E\left[\left(\sqrt{\Lambda}^{-1}P^T(\bm{x}-\E[\bm{x}])\right) \left(\sqrt{\Lambda}^{-1}P^T(\bm{x}-\E[\bm{x}])\right)^T\right] = & \sqrt{\Lambda}^{-1}P^T\left(P\Lambda P^T\right)P\sqrt{\Lambda}^{-1} \notag\\
= & I_n.
\end{align}

We use the $n:=12$ images obtained from \cite{ICALAB} shown in Fig.~\ref{fig:source} and expressed by $128\times 128$ matrices, $I_1,I_2,\ldots,I_{n}$.

\begin{figure}[H]
  \begin{center}
   \includegraphics[width=\columnwidth]{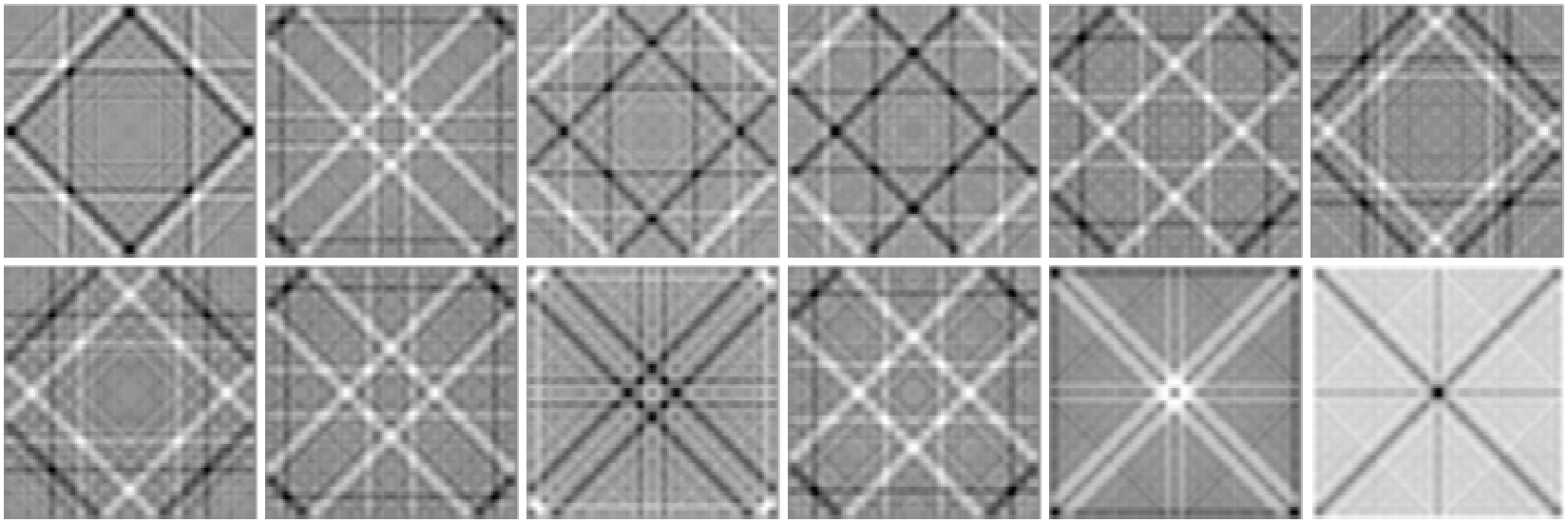}
  \end{center}
  \caption{Test images as source signals.}
  \label{fig:source}
\end{figure}

We regard the $n=12$ images as mutually independent signals according to the following discussion.
We let $s_i:=\vec(I_i), i=1,2,\ldots,n$ denote $T\ (:=128^2)$-dimensional column vectors and let $s_i(t)$ denote the $t$-th element of $s_i$.
That is, each $s_i$ has $T$ samples.
Furthermore, we define the source matrix $S:=(s_1,s_2,\ldots,s_{n})^T\in\mathbb R^{n\times T}$.
We then mix the source signals using a mixing matrix $A\in\mathbb R^{n\times n}$ to obtain $X=AS$ as observed signals.
In our experiment, $A$ is randomly chosen such that the sum of the elements of each row vector of $A$ is $1$.
The images of the observed signals $X$ are shown in Fig.~\ref{fig:mix}.
\begin{figure}[H]
  \begin{center}
   \includegraphics[width=\columnwidth]{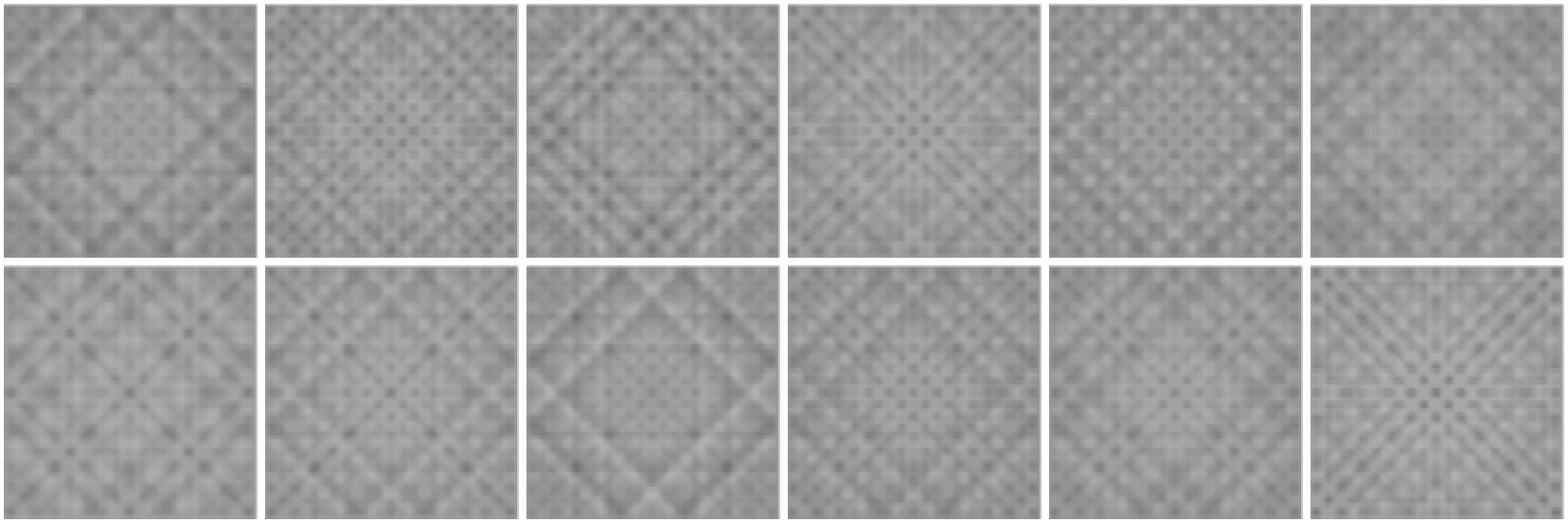}
  \end{center}
  \vspace{-0.4\baselineskip}
  \caption{Mixed images caused by the mixing matrix $A$.}
  \label{fig:mix}
\end{figure}

We wish to find a separating matrix $B$ without using any information from $S$ so that $Z:=BX$ is as mutually independent as possible.
We first compute $N:=n(n+1)/2=78$ matrices $A_1=Q^X(M_{11}), A_2=Q^X(M_{12}), \ldots, A_{N}=Q^X(M_{nn})$, as discussed in Section \ref{Sec_4_1}.
Here, we regard the operation $\E[\cdot]$ as the sample mean.
Then, we must jointly diagonalize $A_1,A_2,\ldots, A_N$, that is, solve Problem \ref{ICAprob}.
Since Newton's method only has local convergence, we need an approximate solution of the problem in advance.
In this subsection, we assume that an approximation $A_{\app}$ of the original mixing matrix $A$ is available, that is, $A_{\app}\approx A$.
We construct such $A_{\app}$ by $A_{\app}=A+0.001*\verb+randn+ (n)$.
Note that $A_{\app}^{-1}$ is not an orthogonal matrix in general.
Therefore, with $A_{\app}$ given, we compute $B_0:=\qf(A_{\app}^{-1})$.
We thus obtain an initial guess $Y_0:=B_0^T \in O(n)$ for Problem \ref{ICAprob}.

With the initial guess $Y_0$, we apply Algorithm \ref{Newton_alg} to obtain an optimal solution $Y_{\N}$ and a separating matrix $B_{\N}=Y_{\N}^T$.
After that, we compute $Z=B_{\N}X$ and estimate the separated images (Fig.~\ref{fig:estimated}) as $J_1, J_2, \ldots, J_n$ such that $\vec(J_i)$ is the $i$-th column of the $T\times n$ matrix $Z^T$ for $i=1,2,\ldots,n$.
Note that, because ICA cannot identify the correct ordering or scaling of the source signals, we have artificially ordered and scaled the estimated signals  to obtain Fig.~\ref{fig:estimated}.
\begin{figure}[htbp]
  \begin{center}
   \includegraphics[width=\columnwidth]{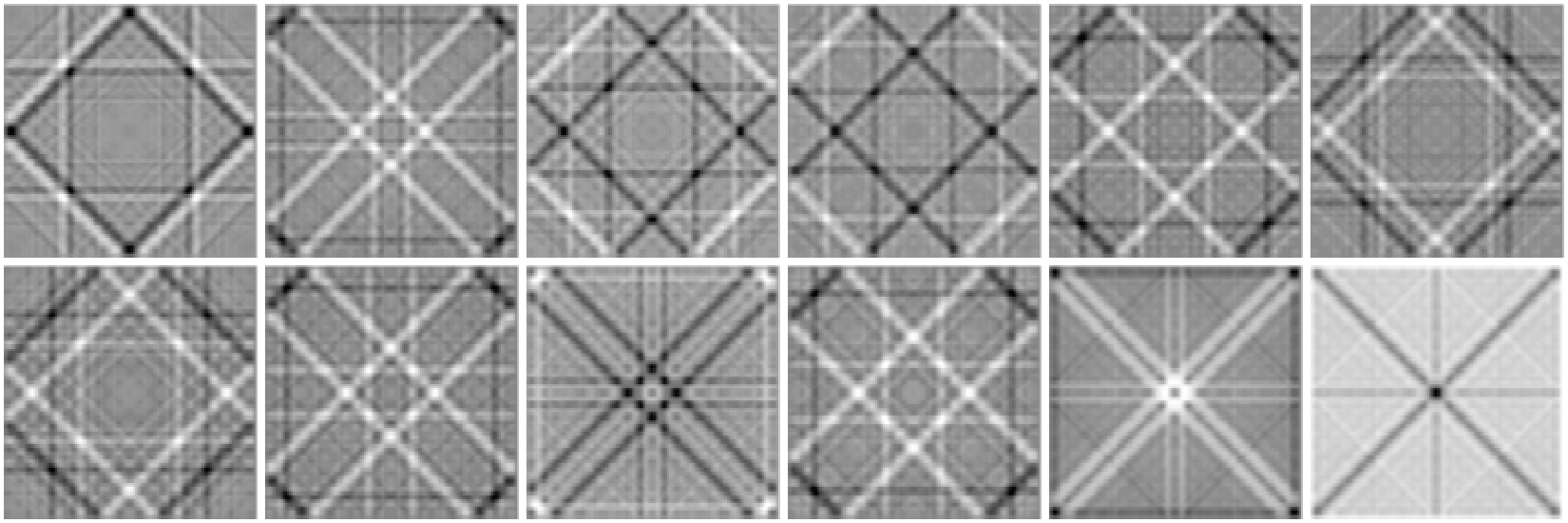}
  \end{center}
  \vspace{-0.3\baselineskip}
  \caption{Estimated images obtained by the proposed method.}
  \label{fig:estimated}
\end{figure}

The estimated images are well separated. Moreover, the proposed Newton method reduces the value of the objective function and gives a critical point by comparing our solution $Y_{\N}$ with the initial point $Y_{0}$.
The values of the objective function $g$ at $Y_{\N}$ and $Y_{0}$ are $g(Y_{\N})=64.18$ and $g(Y_{0})=67.93$.
The norm of the gradient of $g$ at $Y_{\N}$ is $\norm{\grad g(Y_{\N})}_{Y_{\N}}=7.917\times 10^{-14}$.
Furthermore, the representation matrix $H_A$ of the Hessian at $Y=Y_{\N}$ defined by \eqref{HA} and \eqref{H11} is positive definite because the smallest eigenvalue of $H_A$ is $5.7485>0$.
Note that $H_A=H_{11}$ because $p=n$.
As in Section \ref{Subsec_Newton}, positive definiteness of this matrix implies that $Y$ is a local minimum of $f$.
Thus, we can conclude that a local optimal solution has been found in our experiment though we cannot guarantee that the solution is a global minimum.

\subsection{Numerical experiments for larger problems}\label{Sec_4_3}
To more intensively investigate the performance of the proposed algorithm, we return to Problem \ref{JDprob} and consider the case $n=p=50, N=10$.
We prepare $N$ randomly chosen $n\times n$ symmetric matrices $A_1,A_2,\ldots,A_N$.
We first apply the Jacobi-like method \cite{cardoso1996jacobi} to obtain an approximate solution $Y_{\J}$.
We adopt a stopping criterion that terminates the iterative process when all the Givens rotations in a sweep have sines smaller than $\varepsilon=\verb+eps+\,(:=2^{-52})$.
We then apply the proposed Newton method by using $Y_{\J}$ as an initial point to obtain $Y_{\N}$.
The results are given as follows.
The values of the objective function $f$ are $f(Y_{\J}) = -4.12 \times 10^{3}$, $f(Y_{\N}) = -4.12 \times 10^{3}$, and $f(Y_{\J})-f(Y_{\N})=7.73\times 10^{-10}>0$.
The norms of $f$ are also compared as $\norm{\grad f(Y_{\J})}_{Y_{\J}}=5.24\times 10^{-10}$, $\norm{\grad f(Y_{\N})}_{Y_{\N}}=2.25\times 10^{-12}$, and $\norm{\grad f(Y_{\J})}_{Y_{\J}}-\norm{\grad f(Y_{\N})}_{Y_{\N}}=5.21\times 10^{-10}>0$.
Furthermore, $Y_\N$ is more orthogonal because we observe $\norm{Y_{\J}^TY_{\J}-I_p}_F-\norm{Y_{\N}^TY_{\N}-I_p}_F=1.26\times 10^{-12}>0$.
The proposed method obviously improves the accuracy of the approximate solution in this experiment.
To determine the statistical significance of the result, we run the same experiments multiple times.
As many as $1000$ experiments with sets of randomly chosen matrices $A_1,A_2,\ldots,A_N$ show that the following inequalities hold all of the time:
\begin{align}
f(Y_{\J})-f(Y_{\N})>10^{-11},\quad \norm{\grad f(Y_{\J})}_{Y_{\J}}-\norm{\grad f(Y_{\N})}_{Y_{\N}}>10^{-11},\\
\norm{Y_{\J}^TY_{\J}-I_p}_F-\norm{Y_{\N}^TY_{\N}-I_p}_F>10^{-13}.
\end{align}

We perform another experiment for $p<n$.
In this case, $n=50, p=30, N=10$, and $A_1,A_2,\ldots, A_N$ are constructed as follows.
We construct $N$ randomly chosen $n\times n$ diagonal matrices $\Lambda_1,\Lambda_2,\ldots,\Lambda_N$ and a randomly chosen $n\times n$ orthogonal matrix $P$, where the diagonal elements $\lambda_1^{(i)},\ldots,\lambda_n^{(i)}$ of each $\Lambda_i$ are positive and in descending order.
We then compute $A_1,A_2,\ldots, A_N$ as $A_i=P\Lambda_i P^T,\ i=1,2,\ldots,N$.
Note that $Y_{\opt}:=PI_{n,p}$ is an optimal solution to the problem.
We compute an approximate solution $Y_{\app}:=\qf(Y_{\opt}+Y_{\rand})$, where $Y_{\rand}$ is a randomly chosen $n\times p$ matrix that has elements less than $0.01$ (absolute values).
With $Y_{\app}$ obtained as an initial point, we apply the proposed Newton's method.
We compare the accuracy of the resultant solution $Y_{\N}$ (obtained after five iterations of the proposed method) with that of $Y_{\app}$.
The differences between the objective function $f$ and the optimal value $f(Y_{\opt})$ are
\begin{equation}
f(Y_{\app})-f(Y_{\opt})=0.136,\quad f(Y_{\N})-f(Y_{\opt})=1.42\times 10^{-14}.
\end{equation}
The norms of the gradient of the objective function $f$ satisfy
\begin{equation}
\norm{\grad f(Y_{\app})}_{Y_{\app}}=2.44,\quad \norm{\grad f(Y_{\N})}_{Y_{\N}}=2.06\times 10^{-13}.
\end{equation}

The results of these numerical experiments are presented in Table \ref{table:quadratic},
where we can observe the quadratic convergence of the sequence generated by the proposed method.
\begin{table}[htbp]
\begin{center}
\caption{Values of the objective function and norms of the gradient of the objective function obtained with five iterations of the proposed method}
\tabcolsep = 4.05pt
\begin{tabular}{c|cccccc}
\hline
$k$ & $1$ & $2$ & $3$ & $4$ &$5$ \\ \hline
$f(Y_k)-f(Y_{\opt})$ & $1.05\times 10^{-3}$ & $9.48\times 10^{-6}$ & $2.40\times 10^{-10}$ & $1.42\times 10^{-14}$ & $1.42\times 10^{-14}$\\
$\norm{\grad f(Y_k)}_{Y_k}$ & $2.04\times 10^{-2}$ & $1.09\times 10^{-3}$ & $6.03\times 10^{-6}$ & $1.37\times 10^{-10}$ & $2.06 \times 10^{-13}$\\
\hline
\end{tabular}
\label{table:quadratic}
\end{center}
\end{table}

\subsection{Application to trust-region subproblems}
When an approximate solution to the problem is not available, we may use the trust-region method instead of Newton's method because the trust-region method has global convergence.
In this subsection, we show that the results of our discussion  in Section \ref{newton_sec} can speed up solving the trust-region subproblem, and hence the performance of the trust-region method.

The trust-region method for the JD problem on the Stiefel manifold is proposed in \cite{Theis:2009:SDR:1532023.1532070}.
In the trust-region method, we use a quadratic model of the objective function $f$.
If we use the Hessian operator, we obtain a quadratic model at the $k$-th iteration:
\begin{equation}
\hat m_{Y_k}(\xi)=f(Y_k)+\langle \grad f(Y_k),\xi\rangle_{Y_k}+\frac{1}{2}\langle \Hess f(Y_k)[\xi], \xi\rangle_{Y_k}, \quad \xi \in T_{Y_k}\St(p,n).
\end{equation}
Then, the trust-region subproblem can be described as
\begin{equation}
\textrm{minimize}\quad \hat m_{Y_k}(\xi) \qquad \textrm{subject to}\quad \norm{\xi}_{Y_k}\le \Delta_k,\label{subpro}
\end{equation}
where $\Delta_k>0$ is the trust-region radius at $Y_k$.
A widely used approach for solving the trust-region subproblem is the truncated conjugate gradient (tCG) method.
See \cite{AbsMahSep2008} for the detail description of the tCG method.

Fix $k$, and let $\xi_j \in T_{Y_k}\St(p,n)$ be the $j$-th iterate of the tCG method for the subproblem expressed by \eqref{subpro}.
In the existing method, $\xi_j$ is updated as a tangent vector.
Instead, we update $B_j \in \Skew(p)$ and $C_j \in \mathbb R^{(n-p)\times p}$ by using our method presented in Section \ref{newton_sec}, where $\xi_j=Y_kB_j+(Y_k)_{\perp}C_j$.
That is, if the tCG method for \eqref{subpro} terminates with $B_J$ and $C_J$, we only have to compute $\xi_J = Y_kB_J+(Y_k)_{\perp}C_J$.
We do not have to construct $\xi_j$ for $j<J$.
Furthermore, in the existing method, $\Hess f(Y_k)[\delta_j]$ for some $\delta_j\in T_{Y_k}\St(p,n)$ must be computed at each iteration of the tCG method.
However, the proposed method needs only $(B_H)_j\in \Skew(p)$ and $(C_H)_j\in \mathbb R^{(n-p)\times p}$, where $\Hess f(Y_k)[\delta_j]=Y_k(B_H)_j+(Y_k)_{\perp}(C_H)_j$.
Given $Y_k\in\St(p,n)$, our method of solving \eqref{subpro} can be summarized as follows:
\begin{algorithm}[H]
\caption{New tCG method for the trust-region subproblem \eqref{subpro}}
\label{tCG_alg}
\begin{algorithmic}[1]
\STATE
Compute $(Y_k)_{\perp}\in \St(n-p,n)$ such that $Y_k^T(Y_k)_{\perp}=0$.
\STATE
Compute the matrices $B_g=Y_k^T\grad f(Y_k),\ C_g=(Y_k)_{\perp}^T\grad f(Y_k)$ so that $\grad f(Y_k)=Y_kB_g+(Y_k)_{\perp}C_g$, where $B_g\in\Skew(p)$ and $C_g\in\mathbb R^{(n-p)\times p}$.
Solve the following trust-region subproblem, which is equivalent to \eqref{subpro}:
\begin{equation}
\begin{aligned}
{\rm minimize} \,\,\,\,\,& \quad f(Y_k)+\left(\tr(B_g^TB)+\tr(C_g^TC)\right)+\frac{1}{2}\left(\tr(B_H^TB)+\tr(C_H^TC)\right)\\
{\rm subject\,\,to} \,\,\,\,\,& \sqrt{\tr(B^TB)+\tr(C^TC)}\le \Delta_k,
\end{aligned}
\label{subpro2}
\end{equation}
where $B_H$ and $C_H$ are defined by \eqref{BH} and \eqref{CH}.
\STATE
Let $(B_k,C_k)$ be the solution to the subproblem \eqref{subpro2} obtained in Step 2.
Compute $\xi=Y_kB_k+(Y_k)_{\perp}C_k$ as a solution to the original subproblem \eqref{subpro}.
\end{algorithmic}
\end{algorithm}

The existing method, which directly solves the subproblem \eqref{subpro}, does not contain Step 1 or Step 3.
However, Step 2 of the proposed method has a lower computational cost than directly solving \eqref{subpro}.
Therefore, if the number of iterations in the inner tCG method needed for solving the trust-region subproblems is sufficiently large,
the proposed method may have a shorter total computational time than the existing method.
These facts imply that our proposed method can reduce the computational cost.

Finally, we numerically compare the proposed method in which the trust-region subproblems are solved by Algorithm \ref{tCG_alg} with the existing method.
We fix $n=100$ and $N=5$.
For each $p\in\{10,20,\dots,90\}$, we construct $100$ sets of symmetric matrices $\{A_1,A_2,\dots, A_N\}$, solve Problem \ref{JDprob} for each set, and compute the average time needed for convergence ($\norm{\grad f(Y_k)}_{Y_k}<10^{-4}$).
\begin{table}[htbp]
\begin{center}
\caption{Computational time (seconds) between the proposed and existing trust-region methods.}
\begin{tabular}{c|ccccccccc}
\hline
$p$ & $10$& $20$ & $30$ & $40$ & $50$ & $60$ & $70$ & $80$ & $90$\\ \hline
Proposed method & $0.32$& $0.53$ & $0.92$ & $1.28$ & $1.63$ & $2.34$ & $2.83$ & $3.46$ & $4.14$\\
Existing method & $0.35$& $0.67$ & $1.04$ & $1.35$ & $1.87$ & $2.75$ & $3.68$ & $4.30$ & $5.48$\\\hline
\end{tabular}
\label{table:TR}
\end{center}
\end{table}
\begin{figure}[htbp]
  \begin{center}
   \includegraphics[width=99.55mm]{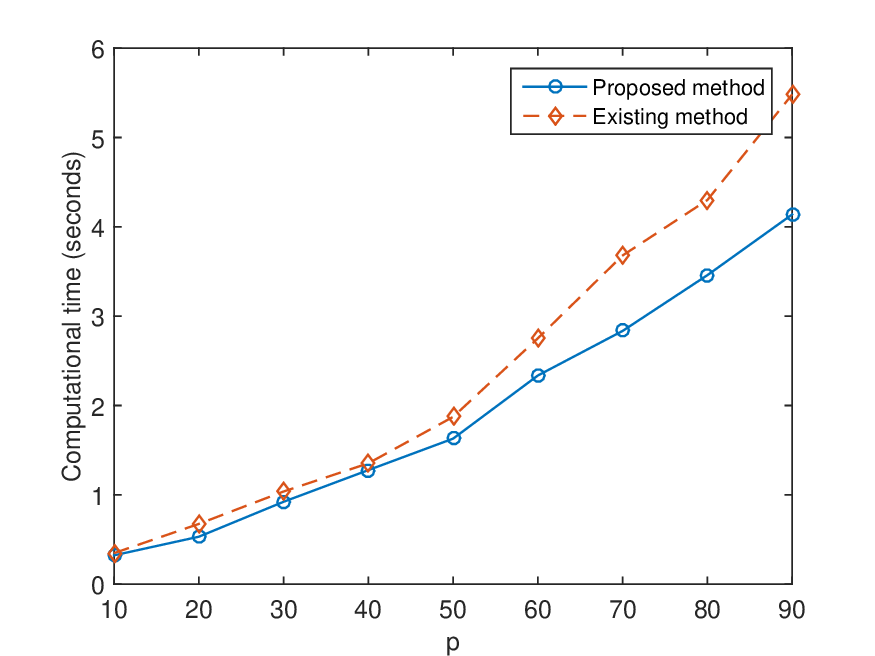}
  \end{center}
  \caption{Computational time (seconds) of the proposed and existing trust-region methods.}
  \label{fig1}
\end{figure}
Table \ref{table:TR} and Fig.~\ref{fig1} show that the proposed method is more efficient overall than the existing method.
The difference between the two methods is especially clear if $p$ is large.

\section{Concluding remarks}\label{Conclusion_sec}
We have considered the joint diagonalization problem on the Stiefel manifold $\St(p,n)$ and have developed Newton's method for the problem.
Newton's equation, $\Hess f(Y)[\xi]=-\grad f(Y)$, is difficult to solve in its original form because we must find an unknown $n\times p$ matrix $\xi$ as a tangent vector to the manifold, that is, under the condition $\xi^TY+Y^T\xi=0$.
To resolve this, we have computed the representation matrix of the Hessian of the objective function using the Kronecker product and the vec and veck operators.
The representation matrix is a $\dim (\St(p,n))\times \dim (\St(p,n))$ symmetric matrix, and we have succeeded in reducing Newton's equation into the standard form with dimension $\dim(\St(p,n))$, which is less than $np$.
Therefore, the resultant equation can be efficiently solved.
With this reduced equation, we have developed a new algorithm for the JD problem.

Furthermore, we have performed numerical experiments to verify that the present algorithm is competent for practical applications and that the algorithm has quadratic convergence.
Specifically, we have applied the proposed method to the image separating problem as an example of independent component analysis and have solved larger problems to more clearly understand the algorithm performance.
In addition, we have proposed a new trust-region method in which trust-region subproblems are solved by the truncated conjugate gradient method based on our expressions of the Hessian of the objective function.
We have observed that the proposed trust-region method is faster than the existing method.

\section*{Acknowledgements}
The author would like to thank the anonymous referees for their valuable comments that helped improve the paper significantly.

\begin{appendix}
\section{Newton's equation for Problem \ref{JDprob} with respect to the canonical metric}\label{canonical}
If the representation matrix $H_A$ is symmetric, we can apply an efficient Krylov subspace method, e.g., the conjugate residual method \cite{saad2003iterative}, to the linear equation \eqref{rep_new_eq}.
In Section \ref{newton_sec}, we have endowed the Stiefel manifold $\St(p,n)$ with the induced metric from the natural inner product in $\mathbb R^{n\times p}$.
In this section, we endow $\St(p,n)$ with another metric $g$ defined by
\begin{equation}
g_Y(\xi,\eta)=\tr\left(\xi^T\left(I_n-\frac{1}{2}YY^T\right)\eta\right),\qquad Y\in\St(p,n),\ \xi,\eta\in T_Y\!\St(p,n),
\end{equation}
which is called the canonical metric on the Stiefel manifold \cite{edelman1998geometry}.
Let $Y\in\St(p,n)$ and $Y_{\perp}\in\St(n-p,n)$ satisfy $Y^TY_{\perp}=0$.
If we let $\xi=YB_1+Y_{\perp}C_1$ and $\eta=YB_2+Y_{\perp}C_2$ with $B_1,B_2\in \Skew(p)$ and $C_1, C_2\in\mathbb R^{(n-p)\times p}$, then \begin{equation}
g_Y(\xi,\eta)=\veck\left(B_1\right)^T\veck(B_2)+\vec(C_1)^T\vec(C_2).
\end{equation}
Thus, the representation matrix of the Hessian with respect to the canonical metric should be a symmetric matrix.
We shall derive the formula for the representation matrix in a manner similar to that in Section \ref{newton_sec}.

The gradient and the Hessian of $f$ on $\St(p,n)$ depend on the metric.
For clarity, let $\grad^{\c} f$ and $\Hess^{\c} f$ respectively denote the gradient and the Hessian of $f$ with respect to the canonical metric $g$.
Let $\bar f$ be an extension of $f$ to $\mathbb R^{n\times p}$.
According to \cite{edelman1998geometry}, the gradient $\grad^{\c} f$ and the Hessian quadratic form $g_Y(\Hess^{\c} f(Y)[\xi],\eta)$ are 
\begin{align}
\grad^{\c} f(Y)=&\grad \bar f(Y)-Y\left(\grad \bar f(Y)\right)^TY\notag\\
=&-4\sum_{l=1}^N \left(A_lY\diag(Y^TA_lY)-Y\diag(Y^TA_lY)Y^TA_lY\right),
\end{align}
and
\begin{equation}
g_Y\left(\Hess^{\c} f(Y)[\xi],\eta\right)=\tr\left(G_Y(\xi)^T\eta\right),
\end{equation}
where we have defined $G_Y$ by
\begin{align}
G_Y(\xi)=&\D \left(\grad \bar f\right)(Y)[\xi]+\frac{1}{2}\Bigl(Y\xi^T\grad \bar f(Y)+\grad \bar f(Y)\xi^TY\notag\\
&\qquad\qquad\qquad\qquad\qquad-\left(I_n-YY^T\right)\xi\left(Y^T\grad \bar f(Y)+\grad \bar f(Y)^TY\right)\Bigr)\notag\\
=&-4\sum_{l=1}^N\left(A_l\xi\diag(Y^TA_lY)+2A_lY\diag(Y^TA_l\xi)\right.\notag\\
&\left.\qquad\qquad\qquad+\frac{1}{2}\left(Y\xi^TA_lY\diag(Y^TA_lY)+A_lY\diag(Y^TA_lY)\xi^TY\right)\right.\notag\\
&\left.\qquad\qquad\qquad-(I_n-YY^T)\xi\sym(Y^TA_lY\diag(Y^TA_lY))\right),
\end{align}
and $\grad \bar f$ is the standard Euclidean gradient of $\bar f$ (as in Section \ref{newton_sec}).
We can easily show that the orthogonal projection \eqref{projection} with respect to the induced metric $\langle\cdot,\cdot\rangle$ from the natural inner product is also the  orthogonal projection with respect to the canonical metric $g$.
Using this fact with the relation $(I_n+YY^T)(I_n-YY^T/2)=I_n$, we obtain
\begin{align}
g_Y\left(\Hess^{\c} f(Y)[\xi],\eta\right)=&\tr\left(G_Y(\xi)^T\eta\right)=\langle P_Y(G_Y(\xi)), \eta\rangle_Y\notag\\
=&\tr\left(P_Y(G_Y(\xi))^T(I_n+YY^T)\left(I_n-\frac{1}{2}YY^T\right)\eta\right)\notag\\
=&g_Y\left((I_n+YY^T)P_Y(G_Y(\xi)), \eta\right)
\end{align}
for arbitrary $\eta \in T_Y \!\St(p,n)$.
Because $(I_n+YY^T)P_Y(G_Y(\xi))$ is a tangent vector at $Y\in\St(p,n)$, we have
\begin{equation}
\Hess^{\c} f(Y)[\xi]=(I_n+YY^T)P_Y(G_Y(\xi)).
\end{equation}

Here, we set  $\xi=YB+Y_{\perp}C$ and $\Hess^{\c} f(Y)[\xi]=YB^{\c}_H+Y_{\perp}C^{\c}_H$, where $B, B^{\c}_H\in\Skew(p)$ and $C, C^{\c}_H\in\mathbb R^{(n-p)\times p}$.
Let $Z_l=Y^TA_lY$, $Z_l^{\perp}=Y^TA_lY_{\perp}$ and $Z_l^{\perp\perp}=Y_{\perp}^TA_lY_{\perp}$.
Then, $B^{\c}_H$ and $C^{\c}_H$ can be written as
\begin{align}
B^{\c}_H=&Y^T\Hess^{\c} f(Y)[\xi]=2\skew\left(Y^TG_Y(\xi)\right)\notag\\
=&-8\skew\Biggl(\sum_{l=1}^N\Bigl((Z_lB+Z_l^{\perp}C)\diag(Z_l)+2Z_l\diag(Z_lB+Z_l^{\perp}C)\notag\\
&+\frac{1}{2}\left((-BZ_l+C^T(Z_l^{\perp})^T)\diag(Z_l) -Z_l\diag(Z_l)B\right)\Bigr)\Biggr),
\end{align}
and
\begin{align}
C^{\c}_H=&Y_{\perp}^T\Hess^{\c} f(Y)[\xi]=Y_{\perp}^TG_Y(\xi)\notag\\
=&-4\sum_{l=1}^N\Bigl(((Z_l^{\perp})^TB+Z_l^{\perp\perp}C)\diag(Z_l)+2(Z_l^{\perp})^T\diag(Z_lB+Z_l^{\perp}C)\notag\\
&\qquad\qquad\qquad-\frac{1}{2}(Z_l^{\perp})^T\diag(Z_l)B-C\sym(Z_l\diag(Z_l))\Bigr).
\end{align}
Therefore, we obtain
\begin{equation}
\begin{pmatrix}\veck(B^{\c}_H)\\ \vec(C^{\c}_H)\end{pmatrix}=H^{\c}_A\begin{pmatrix}\veck(B)\\ \vec(C)\end{pmatrix},
\end{equation}
where the representation matrix $H^{\c}_A$ is given by $\begin{pmatrix}H^{\c}_{11}&H^{\c}_{12}\\H^{\c}_{21}&H^{\c}_{22}\end{pmatrix}$ with
\begin{align}
H^{\c}_{11}=&-4D_p^T\times\sum_{l=1}^N\left(\diag(Z_l)\otimes Z_l+2(I_p\otimes Z_l)\Delta_p (I_p\otimes Z_l)\right.\notag\\
&\qquad\qquad\qquad\qquad\left.-\sym(Z_l\diag(Z_l))\otimes I_{p}\right)D_p,
\end{align}
\begin{equation}
H^{\c}_{12}=-4D_p^T\sum_{l=1}^N\left(\diag(Z_l)\otimes Z_l^{\perp}+2(I_p\otimes Z_l)\Delta_p(I_p\otimes Z_l^{\perp})-\frac{1}{2}I_p\otimes \diag(Z_l)Z_l^{\perp}\right),
\end{equation}
\begin{align}
H^{\c}_{21}=-4\sum_{l=1}^N\biggl(\diag(Z_l)\otimes (Z_l^{\perp})^T+2(I_p\otimes (Z_l^{\perp})^T)\Delta_p(I_p\otimes Z_l)\notag\\
\left.-\frac{1}{2}I_p\otimes (Z_l^{\perp})^T\diag(Z_l)\right)D_p,
\end{align}
and
\begin{align}
H^{\c}_{22}=&-4\sum_{l=1}^N\left(\diag(Z_l)\otimes Z_l^{\perp\perp}+2(I_p\otimes (Z_l^{\perp})^T)\Delta_p(I_p\otimes Z_l^{\perp})\right.\notag\\
&\qquad\qquad\qquad-\sym(Z_l\diag(Z_l))\otimes I_{n-p}\Bigr).
\end{align}
Therefore, the solution $\xi$ to Newton's equation,
\begin{equation}
\Hess^{\c} f(Y)[\xi]=-\grad^{\c} f(Y),
\end{equation}
is $\xi=YB+Y_{\perp}C$, where $B$ and $C$ satisfy
\begin{equation}
\begin{pmatrix}\veck(B)\\ \vec(C)\end{pmatrix}=-(H^{\c}_A)^{-1}\begin{pmatrix}\veck(Y^T\grad^{\c}f(Y))\\ \vec(Y_{\perp}^T\grad^{\c}f (Y))\end{pmatrix}.
\end{equation}

Note that $H^{\c}_A$ should be symmetric so that
\begin{equation}
(H^{\c}_{11})^T=H^{\c}_{11},\ (H^{\c}_{12})^T=H^{\c}_{21},\ (H^{\c}_{22})^T=H^{\c}_{22}.\label{App_relation}
\end{equation}
We can also directly derive Eq.~\eqref{App_relation}.

We further emphasize that we can check the positive definiteness of the Hessian via the representation matrix $H^{\c}_A$ because $H^{\c}_A$ is a symmetric matrix. 

\end{appendix}
\bibliographystyle{abbrv}
\bibliography{ref_JD.bib}

\end{document}